\theoremstyle{plain} 
\theoremstyle{definition}
\theoremstyle{remark}
\newtheorem{theorem}{Theorem}[section]
\begin{document}

\title{ Hyperbolic model for Helmholtz equation with impedance boundary conditions}

\author{Ramaz Botchorishvili} 
\address{Faculty of Exact and Natural Sciences, Ivane  Javakhishvili Tbilisi State University, Tbilisi, Georgia} 
\email[Corresponding author]{ramaz.botchorishvili@tsu.ge (Corresponding author)} 
\author{Tamar Janelidze} 
\address{Faculty of Exact and Natural Sciences, Ivane  Javakhishvili Tbilisi State University, Tbilisi, Georgia;
         Institute of Energy and Climate research 8: Troposphere, Research Center J\"ulich, J\"ulich, Germany } 
\email{t.janelidze@fz-juelich.de} 







\keywords{Helmholtz equation , first order hyperbolic system approach , well balanced property}
\subjclass[2010]{65L10, 65M06}

\date{\today}

\begin{abstract}
		Solution of Helmholtz equation with impedance boundary condition on finite interval is equivalently reformulated as 
		steady state of initial boundary value problem for first order hyperbolic system of partial differential equations. 
		Particularly interesting property of the proposed hyperbolic model is that steady state is achieved in finite time. 
		For large wavenumber the numerically challenging task for Helmholtz equation is achieving high accuracy with small number of nodal points. We successfully solved this problem by means of using  well balanced scheme approach. 
		Numerical tests demonstrate excellent computational potential of the proposed method: high accuracy is achieved for large wavenumber with small number of nodal points in space and time.
\end{abstract}

\maketitle

\section{Introduction}
We consider Helmholtz equation on finite interval with impedance boundary consitions
\begin{equation}\label{eq:Helmholz}
	\frac{d^2 u}{d x^2}+k^2 u=f, ~ 0<x<1,
\end{equation}
\begin{equation}\label{eq:ImpedanceCondition}
u'(0) + iku(0)=g_0, ~~~ u'(1) - iku(1)=g_1, 
\end{equation}
\begin{equation}\label{eq:complexFunctions}
u(x)= u_R(x) + iu_I(x),   f(x)= f_R(x) + if_I(x),  g_0= g_{0R} + ig_{0I},  g_0= g_{1R} + ig_{1I},
\end{equation}
where  $i$ is unit imaginary number, $i^2=-1$, $k, g_{RI}, g_{1I}$ are real numbers, $k>0$, $f_R(x), f_I(x)$ are sufficiently smooth real 
valued functions that ensure the functions $u_R(x), u_I(x)$ are sufficiently smooth. 

Main numerical challenge associated with Helmholtz equation is finding such spatial discretization that ensure that obtained linear 
system can be solved efficiently by iterative methods. If $\Delta x$ is discretization step and wavenumber is small then $k \Delta x = const$ 
is a good choice for determining suitable mesh ~ \cite{ihlenburg1995finite}. If wavenumber is large then for dealing with the so 
called pollution effect $k^{\gamma} \Delta x $ small is needed with  $\gamma >1$ resulting in very fine mesh and  very large system of 
linear algebraic equations ~\cite{ihlenburg1995finite}. Solving this linear system by iterative methods is a difficult task, 
see ~\cite{ernst2012difficult} for detailed exposition on the subject.  
 
Here we propose new method that is motivated by first order system approach ~\cite{nishikawa2007first} introduced initially for 
diffusion equation and by well balanced schemes for hyperbolic conservation laws with source terms pioneered in 
 ~\cite{bermudez1994upwind, greenberg1996well}. The strategy set by first order system approach is to solve an equivalent 
first-order hyperbolic system instead of the second-order diffusion equation thus achieving stable computation with time step $O(\Delta x)$ 
that is typical for hyperbolic equations instead of time step $O((\Delta x)^2)$ that is typical for parabolic equations. The srtategy set 
by  the well-balanced property of numerical schemes is  preservation of discrete equilibrium states that results in high accuracy with 
small number of nodal points. In our approach we offer suitable combination of these approaches resulting in highly efficient numerical method. 

The rest of the paper is organized as follows: hyperbolic model is developed in section \ref{sec:hypmodel}; numerical scheme is developed in section \ref{sec:numerical}; 
it is investigated theoretically and numerically in sections \ref{sec:numerical} and \ref{sec:numtests}.


\section{Hyperbolic model}
\label{sec:hypmodel}
Following the strategy set by first order system approach ~\cite{nishikawa2007first} the goal is finding  first order hyperbolic system of 
partial differential equations that at steady states is equivalent to Helmholtz equation (\ref{eq:Helmholz}). 
We consider the following linear hyperbolic system 

\begin{equation}\label{eq:HypSystemQ}
\frac{\partial \vec{Q}}{\partial t}+A\frac{\partial\vec{Q}}{\partial x}=B\vec{Q}+\vec{F}, ~t>0,
\end{equation}
where $\vec{Q}=(Q_1(t,x), Q_2(t,x), Q_3(t,x), Q_4(t,x))^T$ is unknown real valued vector function, \\ $\vec{F}=(F_1(x), F_2(x), F_3(x), F_4(x))^T$ 
is real valued vector function, $A,B \in \mathbb{R}^{4\times4}$. 

The system (\ref{eq:HypSystemQ}) is equipped with initial and boundary conditions 

\begin{equation}\label{eq:InitConditionQ}
\vec{Q}(0,x)=\vec{Q}_0(x), ~~0<x<1,
\end{equation}

\begin{equation}\label{eq:BoundCondition,Q}
B_0\vec{Q}(t,0)=\vec{G}_0, ~~B_1\vec{Q}(t,1)=\vec{G}_1, ~~t \geq 0,
\end{equation}
where $\vec{Q}_0(x)$ is sufficiently smooth, $B_0,B_1 \in \mathbb{R}^{4\times4}$, $\vec{G}_0,\vec{G}_1 \in \mathbb{R}^{4}$. We set

\begin{equation}\label{eq:lambda<0,>0}
\lambda_1,\lambda_2 >0, \lambda_3,\lambda_4 <0, 
\end{equation}
\begin{equation}\label{eq:G0,G1}
\vec{G}_0=(g_{0R},g_{0I},0,0)^T, ~~	\vec{G}_1=(0,0,g_{1R},g_{1I})^T,
\end{equation}

\begin{equation}\label{eq:B_0,B_1}
B_0 = 
\begin{pmatrix}
k    &  0    &  1   &  0     \\
0    &  k    &  0   &  1     \\
0    &  0    &  0   &  0  \\
0    &  0    &  0   &  0  \\
\end{pmatrix}
,
B_1 = 
\begin{pmatrix}
0    &  0    &  0   &  0  \\
0    &  0    &  0   &  0  \\
0    &  k    &  1   &  0     \\
-k   &  0    &  0   &  1     \\
\end{pmatrix}
,
\end{equation}
\begin{equation}\label{eq:RHS,F}
\vec{F}= \frac{1}{2}
\begin{pmatrix}
[f_I (\lambda_2-\lambda_4)+f_R (\lambda_1-\lambda_3)] \frac{1}{k} \\
[f_I ({\lambda}_2-{\lambda}_4)+f_R ({\lambda}_3-{\lambda}_1)] \frac{1}{k} \\
f_I (\lambda_4 -\lambda_2)+f_R (\lambda_1+\lambda_3) \\
f_I (\lambda_2+\lambda_4)+f_R (\lambda_1-\lambda_3) 
\end{pmatrix}
,
\end{equation}
\begin{equation}\label{eq:LHS,A}
A = \frac{1}{2}
\begin{pmatrix}
\lambda_1+\lambda_4    &  \lambda_2-\lambda_3    & (\lambda_1-\lambda_3)\frac{1}{k}  &  (\lambda_2-\lambda_4)\frac{1}{k} \\
\lambda_4-\lambda_1    &  \lambda_2+\lambda_3    & (\lambda_3-\lambda_1)\frac{1}{k}  &  (\lambda_2-\lambda_4)\frac{1}{k} \\
(\lambda_1-\lambda_4)k &  (\lambda_3-\lambda_2)k & \lambda_1+\lambda_3            &  \lambda_4-\lambda_2  \\
(\lambda_1-\lambda_4)k &  (\lambda_2-\lambda_3)k & \lambda_1-\lambda_3            &  \lambda_4+\lambda_2  \\
\end{pmatrix}
,
\end{equation}
\begin{equation}\label{eq:RHS,B}
B = \frac{1}{2}
\begin{pmatrix}
(\lambda_3-\lambda_1)k    &  (\lambda_4-\lambda_2)k    &  \lambda_1+\lambda_4   &  \lambda_2-\lambda_3     \\
(\lambda_1-\lambda_3)k    &  (\lambda_4-\lambda_2)k    & \lambda_4-\lambda_1    &  \lambda_2+\lambda_3     \\
-(\lambda_1+\lambda_3)k^2 &  (\lambda_2-\lambda_4)k^2  & (\lambda_1-\lambda_4)k &  (\lambda_2-\lambda_3)k  \\
(\lambda_3-\lambda_1)k^2  &  -(\lambda_2+\lambda_4)k^2 & (\lambda_1-\lambda_4)k &  (\lambda_2-\lambda_3)k  \\
\end{pmatrix}
.
\end{equation}
\begin{theorem}[Steady state equivalence]\label{Th:equivalence}
	Helmholtz equation with impedance boundary conditions (\ref{eq:Helmholz}),(\ref{eq:ImpedanceCondition}) is equivalent to initial 
	boundary value problem for hyperbolic system (\ref{eq:HypSystemQ})-(\ref{eq:RHS,B}) at steady states in the following sense: 
\begin{equation}\label{eq:U=Q}
	u_R(x)=Q_1(t,x), ~u_I(x)=Q_2(t,x),~\frac{du_R(x)}{dx}=Q_3(t,x), ~\frac{du_I(x)}{dx}=Q_4(t,x), ~t \geq 0.
\end{equation}
\end{theorem}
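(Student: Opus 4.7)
The plan is to observe that the matrices $B$ and $\vec{F}$ in (\ref{eq:RHS,B}) and (\ref{eq:RHS,F}) factor through $A$: there exist a companion matrix $M$ and a source vector $\vec{f}$, depending only on the Helmholtz data, such that $B = AM$ and $\vec{F} = A\vec{f}$. Once this factorization is in hand, invertibility of $A$ finishes the job at steady state.

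First I would set $\partial_t \vec{Q} = 0$ in (\ref{eq:HypSystemQ}) to reduce the problem to the linear ODE $A\, d\vec{Q}/dx = B\vec{Q} + \vec{F}$. Next I would introduce the companion matrix associated with the standard first-order reduction of $u'' + k^2 u = f$ into real form,
\begin{equation*}
M = \begin{pmatrix} 0 & 0 & 1 & 0 \\ 0 & 0 & 0 & 1 \\ -k^2 & 0 & 0 & 0 \\ 0 & -k^2 & 0 & 0 \end{pmatrix},
\qquad \vec{f} = (0,\, 0,\, f_R,\, f_I)^T,
\end{equation*}
and claim $B = AM$ and $\vec{F} = A\vec{f}$. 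These identities are verified by direct entry-wise multiplication: because of the sparse structure of $M$, every entry of $AM$ reduces to one of $-k^2 A_{i,3}$, $-k^2 A_{i,4}$, $A_{i,1}$, $A_{i,2}$, which one matches against (\ref{eq:RHS,B}); the four components of $A\vec{f}$ are matched similarly with (\ref{eq:RHS,F}). The steady-state ODE then takes the form $A(d\vec{Q}/dx - M\vec{Q} - \vec{f}) = 0$; since (\ref{eq:lambda<0,>0}) gives $\det A = \lambda_1 \lambda_2 \lambda_3 \lambda_4 \neq 0$, we deduce
\begin{equation*}
\frac{d\vec{Q}}{dx} = M\vec{Q} + \vec{f}.
\end{equation*}
Componentwise this reads $Q_3 = Q_1'$, $Q_4 = Q_2'$, $Q_1'' + k^2 Q_1 = f_R$, $Q_2'' + k^2 Q_2 = f_I$; setting $u_R := Q_1$, $u_I := Q_2$ yields both the identification (\ref{eq:U=Q}) and the real and imaginary parts of Helmholtz (\ref{eq:Helmholz}). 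The reverse direction is the same chain read backwards, as each step is an equivalence.

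For the boundary data I would evaluate $B_0 \vec{Q}(t,0) = \vec{G}_0$ and $B_1 \vec{Q}(t,1) = \vec{G}_1$ under the identification (\ref{eq:U=Q}); the two nontrivial rows of each should reproduce exactly the real and imaginary parts of $u'(0) + iku(0) = g_0$ and $u'(1) - iku(1) = g_1$. The rank-two structure of $B_0$ and $B_1$, together with $\vec{G}_0$ and $\vec{G}_1$ having two zero components, is consistent with the two incoming and two outgoing characteristics dictated by (\ref{eq:lambda<0,>0}), so the IBVP is well-posed with precisely the right amount of data to encode both impedance conditions.

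The only real obstacle is the algebraic verification of $B = AM$ and $\vec{F} = A\vec{f}$: this is bookkeeping over $16+4$ scalar entries, but it is what justifies the otherwise opaque specific form of $A$, $B$, $\vec{F}$ in (\ref{eq:LHS,A})--(\ref{eq:RHS,B}) and reveals that the entire construction amounts to pre-multiplying the classical first-order reduction of Helmholtz by an invertible matrix $A$ whose spectral sign pattern has been chosen to match the in/out-flow structure imposed by the boundary conditions.
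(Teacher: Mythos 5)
Your proposal is correct and takes essentially the same route as the paper: the paper's proof introduces exactly your companion matrix and source vector as $B_q$ and $\vec{F}_q$ in (\ref{eq:Helmholtz,q})--(\ref{eq:B_q,F_q}), and its step ``multiply the steady-state equation by $A^{-1}$ to obtain $d\vec{Q}/dx = B_q\vec{Q}+\vec{F}_q$'' is precisely your factorization $B=AM$, $\vec{F}=A\vec{f}$, which the paper leaves implicit while you propose to verify it entrywise. The only added value of your explicit bookkeeping is that it would surface an apparent sign discrepancy in the $(3,4)$ entry of $B$ in (\ref{eq:RHS,B}), which reads $(\lambda_2-\lambda_3)k$ but should be $(\lambda_3-\lambda_2)k$ to equal $(AM)_{3,4}=A_{3,2}$.
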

\begin{proof} Helmholtz equation (\ref{eq:Helmholz}) equivalently writes as the following first order system of equations:
\begin{equation}\label{eq:Helmholtz,q}
	\frac{d\vec{q}}{dx}=B_q \vec{q}+\vec{F_q},
\end{equation}
\begin{equation}\label{eq:v,q}
v_R=\frac{du_R}{dx}, ~~	v_I=\frac{du_I}{dx}, ~~ \vec{q}=(u_R,u_I,v_R,v_I)^T
\end{equation}
\begin{equation}\label{eq:B_q,F_q}
B_q= \begin{pmatrix}
	0    &  0   & 1 & 0 \\
	0    &  0   & 0 & 1 \\
	-k^2 &  0   & 0 & 0 \\
	0    & -k^2 & 0 & 0\\
	\end{pmatrix}
, ~~~~ \vec{F_q} = 
	\begin{pmatrix}
	0   \\
	0   \\
	f_R \\
	f_I \\
	\end{pmatrix}
	.
\end{equation}

From the system (\ref{eq:Helmholtz,q})-(\ref{eq:B_q,F_q}) one can easily recover Helmholtz equation (\ref{eq:Helmholz}) by means of using first and second equations of (\ref{eq:Helmholtz,q}) in the third and fourth equations. 

Using the above notations boundary condition (\ref{eq:ImpedanceCondition}) equivalently writes:
\begin{equation}\label{eq:BoundCondition,q}
B_0\vec{q}(0)=\vec{G}_0, ~~B_1\vec{q}(1)=\vec{G}_1.
\end{equation}

At steady states ${\partial Q_i(t,x) / \partial t} =0, ~i=1,2,3,4$ and hyperbolic system (\ref{eq:HypSystemQ}) is reduced to the following system of equations: 
\begin{equation}\label{eq:Helmholtz,ODE,Q,1}
A\frac{\partial\vec{Q}}{\partial x}=B\vec{Q}+\vec{F}.
\end{equation}

Inverse of $A$ exist, since $det(A)=\lambda_1 \lambda_2 \lambda_3 \lambda_4$ and $det(A) \neq 0$ because of (\ref{eq:lambda<0,>0}). 
Multiplying (\ref{eq:Helmholtz,ODE,Q,1}) on $A^{-1}$ from the left yields:
\begin{equation}\label{eq:Helmholtz,ODE,Q,2}
\frac{\partial \vec{Q}}{\partial x}=B_q \vec{Q}+\vec{F_q}.
\end{equation}

At steady states $\vec{Q}$ does not depend on variable $t$  and it depends on variable $x$ only. Therefore (\ref{eq:Helmholtz,q}) and (\ref{eq:Helmholtz,ODE,Q,2}) 
are in fact the same equations when ${\partial Q_i(t,x) / \partial t} =0, ~i=1,2,3,4$ and (\ref{eq:U=Q}) is also valid. For the same reasons boundary conditions 
(\ref{eq:BoundCondition,q}) and (\ref{eq:BoundCondition,Q}) are equivalent when $\vec{Q}$ does not depend on variable $t$.  

Thus we have shown that Helmholtz equation with impedance boundary conditions  (\ref{eq:Helmholz}),(\ref{eq:ImpedanceCondition})  and hyperbolic system of partial differential equations with initial and boundary conditions (\ref{eq:HypSystemQ})-(\ref{eq:RHS,B}) are equivalent to the same problem when ${\partial Q_i(t,x) / \partial t} =0, ~i=1,2,3,4$ that concludes the proof. 
\end{proof}

\begin{theorem}[Stationary solution of hyperbolic model]\label{Th:Stationary}
	Smooth solution of the problem (\ref{eq:HypSystemQ})-(\ref{eq:RHS,B}) obtained with arbitrary initial value $\vec{Q}_0(x)$ reaches steady state in 
	finite time if $\lambda_1=\lambda_2$ and $\lambda_3=\lambda_4$.
\end{theorem}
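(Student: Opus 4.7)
The plan is to subtract off the steady state to reduce to a homogeneous problem, diagonalize the matrix $A$ to obtain two characteristic families, and then exploit the double-eigenvalue hypothesis to apply the method of characteristics and conclude finite-time extinction of the perturbation.

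\emph{Reduction.} By Theorem~\ref{Th:equivalence} the Helmholtz BVP (\ref{eq:Helmholz})--(\ref{eq:ImpedanceCondition}) admits a unique smooth solution, which lifts to a unique steady state $\vec{Q}_\infty(x)$ of the hyperbolic system. Setting $\vec{W}(t,x) := \vec{Q}(t,x) - \vec{Q}_\infty(x)$, the function $\vec{W}$ satisfies the homogeneous PDE $\partial_t \vec{W} + A\partial_x \vec{W} = B\vec{W}$ with homogeneous boundary data $B_0 \vec{W}(t,0) = 0$, $B_1 \vec{W}(t,1) = 0$, and smooth initial datum $\vec{W}(0,\cdot) = \vec{Q}_0 - \vec{Q}_\infty$. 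The theorem is equivalent to $\vec{W} \equiv 0$ for $t$ large enough.

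\emph{Characteristic form.} Writing $a := \lambda_1 = \lambda_2 > 0$ and $c := \lambda_3 = \lambda_4 < 0$, a direct computation from (\ref{eq:LHS,A}) shows that $A$ has precisely two eigenvalues $a$ and $c$, each of geometric multiplicity two, with two-dimensional eigenspaces $E_a$ and $E_c$. Let $P$ denote the change of basis whose first two columns span $E_a$ and whose last two span $E_c$, and set $\vec{V} := P^{-1}\vec{W} = (\vec{V}_R, \vec{V}_L)^T$. The system becomes
\begin{equation*}
\partial_t \vec{V} + \Lambda \partial_x \vec{V} = M\vec{V}, \qquad \Lambda = \mathrm{diag}(a,a,c,c), \quad M := P^{-1} B P,
\end{equation*}
and one verifies from (\ref{eq:B_0,B_1}) that the boundary conditions translate into $\vec{V}_R(t,0) = 0$ and $\vec{V}_L(t,1) = 0$, that is, the incoming characteristic components are prescribed at each endpoint, as required for a well-posed hyperbolic IBVP.

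\emph{Finite-time extinction.} Provided $M$ respects the decomposition $E_a \oplus E_c$, each pair of characteristic variables evolves under a single transport speed. Along every right-moving characteristic $x = x_0 + a(t - t_0)$ issuing from $x_0 = 0$, the pair $\vec{V}_R$ then satisfies the linear ODE $d\vec{V}_R/dt = M_{RR}\vec{V}_R$ with vanishing initial datum, so $\vec{V}_R \equiv 0$ along that characteristic. Since every $x \in (0,1)$ lies on such a characteristic for $t > 1/a$, one concludes $\vec{V}_R(t,\cdot) \equiv 0$ then; symmetrically $\vec{V}_L(t,\cdot) \equiv 0$ for $t > 1/|c|$. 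Taking $T := \max(1/a, 1/|c|)$ gives $\vec{W} \equiv 0$ and hence $\vec{Q}(t,\cdot) \equiv \vec{Q}_\infty$ for all $t \geq T$.

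\emph{Main obstacle.} The hardest step is verifying the block-diagonal structure of $M$ with respect to $E_a \oplus E_c$, which is precisely what the hypothesis $\lambda_1 = \lambda_2$, $\lambda_3 = \lambda_4$ provides. With four distinct eigenvalues, each source coupling between different characteristic families would in general regenerate data after the round-trip time and finite-time extinction would fail; with equal pairs the two transport families of the hyperbolic model collapse to ``right-propagating'' and ``left-propagating'' copies of a single complex-valued wave, and the block-diagonality reflects their physical independence. Carrying the step out rigorously requires a careful calculation of $M$ from the explicit forms (\ref{eq:LHS,A}) and (\ref{eq:RHS,B}) together with the eigenvectors of $A$ found in the previous step, and is the algebraic heart of the argument.
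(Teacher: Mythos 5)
Your overall strategy (pass to characteristic variables, use the homogeneous boundary data on the incoming components, run linear ODEs along characteristics) is sound, but the step you defer as ``the algebraic heart'' --- that the zero-order matrix $M=P^{-1}BP$ becomes block-diagonal with respect to $E_a\oplus E_c$ under the hypothesis $\lambda_1=\lambda_2$, $\lambda_3=\lambda_4$ --- is false, and the rest of your argument leans on it. In the Riemann variables $\vec{r}=L\vec{Q}$, $L=B_0+B_1$ (your $P^{-1}$), the coupling matrix computed in (\ref{eq:B_r,F_r}) is
\begin{equation*}
B_r= k \begin{pmatrix}
0            &  -\lambda_1   & \lambda_1  & \lambda_1 \\
\lambda_2    &           0   & -\lambda_2 & \lambda_2 \\
0            &           0   &          0 & \lambda_3 \\
0            &           0   & -\lambda_4 &         0
\end{pmatrix},
\end{equation*}
whose upper-right $2\times2$ block does not vanish for any choice of the $\lambda_i$: the right-moving pair is forced by the left-moving pair. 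What the structure actually provides is block \emph{triangularity} (the pair $(r_3,r_4)$ is autonomous), and what the hypothesis $\lambda_3=\lambda_4$ buys is that this autonomous pair rides a single characteristic speed, so a single-characteristic ODE (or energy) argument applies to it; similarly $\lambda_1=\lambda_2$ for the other pair. Consequently your simultaneous extinction with $T=\max(1/a,1/|c|)$ is not justified: along a right-moving characteristic launched before the left-movers have died out, $\vec{V}_R$ obeys the \emph{inhomogeneous} ODE $d\vec{V}_R/dt=M_{RR}\vec{V}_R+M_{RL}\vec{V}_L$ with $M_{RL}\neq0$, so zero data at the foot of the characteristic does not give zero along it. The argument must be run as a cascade --- first kill $(r_3,r_4)$ for $t>1/|\lambda_3|$, then kill $(r_1,r_2)$ for $t>1/|\lambda_3|+1/\lambda_1$ --- which is exactly what the paper does, using the quadratic quantities $(r_3')^2+(r_4')^2$ and $(r_1')^2+(r_2')^2$, for which the equal-speed hypothesis makes the intra-pair cross terms cancel.

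A further point: you homogenize by subtracting a steady state $\vec{Q}_\infty$, which presupposes that the impedance BVP (\ref{eq:Helmholz})--(\ref{eq:ImpedanceCondition}) is solvable --- something Theorem \ref{Th:equivalence} does not assert (it is an equivalence statement, not an existence statement). The paper instead differentiates the system in time, so that $\vec{r}\,'=\partial_t\vec{r}$ satisfies the homogeneous problem with homogeneous boundary data; showing $\vec{r}\,'\equiv0$ after finite time proves the solution is stationary without ever constructing $\vec{Q}_\infty$. You should either adopt that device or supply the existence argument separately.
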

\begin{proof} Hyperbolic model (\ref{eq:HypSystemQ})-(\ref{eq:RHS,B}) equivalently writes:
\begin{equation}\label{eq:HypSystem,r}
     \frac{\partial \vec{r}}{\partial t}+\Lambda\frac{\partial\vec{r}}{\partial x}=B_r\vec{r}+\vec{F}_r, ~t>0,
\end{equation}
\begin{equation}\label{eq:InitCondition,r}
     \vec{r}(0,x)=\vec{r}_0(x), ~~0<x<1,
\end{equation}
\begin{equation}\label{eq:BoundCondition,r}
  r_1(t,0)=g_{0R}, ~r_2(t,0)=g_{0I}, ~r_3(t,1)=g_{1R}, ~r_4(t,1)=g_{1I},  ~t \geq 0,
\end{equation}
where $\vec{r}$ is Riemann invariant,  
\begin{equation}\label{eq:r,Lambda}
	\vec{r}=L\vec{Q}, ~~L=B_0+B_1, ~~\Lambda=diag\{\lambda_1, \lambda_2, \lambda_3,\lambda_4 \},
\end{equation}
\begin{equation}\label{eq:B_r,F_r}
B_q= k \begin{pmatrix}
0            &  -\lambda_1   & \lambda_1  & \lambda_1 \\
\lambda_2    &           0   & -\lambda_2 & \lambda_2 \\
0            &           0   &          0 & \lambda_3 \\
0            &           0   & -\lambda_4 &         0\\
\end{pmatrix}
, ~~~~ \vec{F_r} = 
\begin{pmatrix}
\lambda_1 f_R \\
\lambda_2 f_I \\
\lambda_3 f_R \\
\lambda_4 f_I \\
\end{pmatrix}
.
\end{equation}

Riemann invariant $\vec{r}$ is smooth function, $\Lambda, B_r, \vec{F}_r$ do not depend on the variable $t$. Therefore for $\vec{r}~'\equiv {\partial r / \partial t}$ 
from (\ref{eq:HypSystem,r})-(\ref{eq:B_r,F_r}) we obtain the following problem: 
\begin{equation}\label{eq:HypSystem,r'}
\frac{\partial \vec{r}~'}{\partial t}+\Lambda\frac{\partial\vec{r}~'}{\partial x}=B_r\vec{r}~',
\end{equation}
\begin{equation}\label{eq:InitCondition,r'}
\vec{r}~'(0,x)=\vec{r}~'_0(x), ~~0<x<1,
\end{equation}
\begin{equation}\label{eq:BoundCondition,r'}
r'_1(t,0)=0, ~r'_2(t,0)=0, ~r'_3(t,1)=0, ~r'_4(t,1)=0,  ~t \geq 0.
\end{equation}  	

 Third and fourth equations of (\ref{eq:HypSystem,r'}) are multiplied by $r'_3$ and $r'_4$ respectively. Summing obtained equations yields: 
\begin{equation}\label{eq:r3'^2+r4'^2}
	\frac{\partial [(r'_3)^2+(r'_4)^2] }{\partial t}+\lambda_3\frac{\partial[(r'_3)^2+(r'_4)^2]}{\partial x}=0.
\end{equation}

From (\ref{eq:r3'^2+r4'^2}) and (\ref{eq:BoundCondition,r'}) we obtain $(r'_3)^2+(r'_4)^2=0$ when $t>1/|\lambda_3|$. Analogically for $(r'_1)^2+(r'_2)^2$ we obtain the following equation:
\begin{equation}\label{eq:r1'^2+r2'^2}
	\frac{\partial [(r'_1)^2+(r'_2)^2] }{\partial t}+\lambda_1\frac{\partial[(r'_1)^2+(r'_2)^2]}{\partial x}=
	2\lambda_1(r'_3+r'_4)r'_1-2\lambda_2(r'_3-r'_4)r'_2.
\end{equation}

From (\ref{eq:r1'^2+r2'^2}), (\ref{eq:BoundCondition,r'}) and from $r'_3=r'_4=0$ when $t>1/|\lambda_3|$ we obtain $(r'_1)^2+(r'_2)^2=0$ when $t>1/|\lambda_3|+1/\lambda_1$. 
Thus ${{\partial r_i} / {\partial t}} =0, i=1,2,3,4$, when $t>1/|\lambda_3|+1/\lambda_1$, i.e. steady state is reached in finite time.
\end{proof}


\section{Numerical scheme}
\label{sec:numerical}
Following ~\cite{botchorishvili2003equilibrium} we consider numerical scheme 
\begin{equation}\label{ns:sc,r}
	\frac{\vec{r}_j^{~n+1}-\vec{r}_j^{~n}}{\Delta t}+\Lambda^+\frac{\vec{r}_j^{~n}-\vec{r}_{j-1,+}^{~n}}{\Delta x}
	+\Lambda^-\frac{\vec{r}_{j+1,-}^{~n}-\vec{r}_{j}^{~n}}{\Delta x}=0,
\end{equation}
\begin{equation}\label{ns:InitialCondition,r}
\vec{r}_j^{~0}=\vec{r}(0,x_j), ~x_j=j\Delta x, ~j=0,1,..,N_x, ~\Delta x=1/N_x,
\end{equation}
\begin{equation}\label{ns:BoundaryCondition,r}
r_{1,0}^{n}=g_{0R}, ~ r_{2,0}^{n}=g_{0I}, ~r_{3,N_x}^{n}=g_{1R}, ~r_{4,N_x}^{n}=g_{1I}, 
\end{equation}
\begin{equation}\label{ns:r+}
	\vec{r}_{j-1,+}^{~n}=e^{B_r\Delta x}\vec{r}_{j-1}^{~n}+
	e^{B_r\Delta x} \int_{x_{j-1}}^{x_j} e^{-B_r(\xi-x_{j-1})}\vec{F}_r(\xi)d\xi,
\end{equation}
\begin{equation}\label{ns:r-}
\vec{r}_{j+1,-}^{~n}=e^{-B_r\Delta x}\vec{r}_{j+1}^{~n}+
e^{-B_r\Delta x} \int_{x_{j+1}}^{x_j} e^{-B_r(\xi-x_{j+1})}\vec{F}_r(\xi)d\xi,
\end{equation}
\begin{equation}\label{ns:Lambda+,Lambda-}
\Lambda^+=diag \{\lambda_1,\lambda_2,0,0\},~\Lambda^-=diag \{0,0,\lambda_3,\lambda_4 \}, 
\end{equation}
\begin{equation*}
t_n=n\Delta t, ~n=0,1,..,N_t, ~\Delta t=T/N_t, ~T>0, N_t,N_x \in \mathbb{N}.
\end{equation*}
\begin{theorem}[Properties of the scheme]
	Numerical scheme (\ref{ns:sc,r})-(\ref{ns:Lambda+,Lambda-}) maintains discrete steady states of (\ref{eq:HypSystem,r})-(\ref{eq:BoundCondition,r}), 
	the scheme is stable under CFL condition $max\{\lambda_1,\lambda_2,|\lambda_3|,|\lambda_4|\}\Delta t/\Delta x <1$ and it is consistent with 
	(\ref{eq:HypSystem,r})-(\ref{eq:BoundCondition,r}) in the sense of local truncation error.
\end{theorem}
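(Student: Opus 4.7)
The plan is to verify each of the three claims in turn. For the \emph{well-balanced property}, I would set $\vec{r}_j^{n+1} = \vec{r}_j^{~n}$ in (\ref{ns:sc,r}) and argue that the two upwinded differences vanish componentwise. The key fact is that the reconstruction formulas (\ref{ns:r+})--(\ref{ns:r-}) are by design the Duhamel solutions of the ordinary differential equation governing stationary profiles of (\ref{eq:HypSystem,r}); hence, if the nodal values $\vec{r}_j^{~n}$ are samples of an exact steady-state solution, then $\vec{r}_{j-1,+}^{~n}$ and $\vec{r}_{j+1,-}^{~n}$ reproduce $\vec{r}_j^{~n}$ exactly. Once that identity is established, the $\Lambda^+$ and $\Lambda^-$ differences both vanish and the scheme preserves the discrete equilibrium.

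For \emph{stability}, I would exploit that $\Lambda$ is diagonal and decompose (\ref{ns:sc,r}) componentwise. For characteristics with $\lambda_i > 0$ the update is left-upwind, for $\lambda_i < 0$ right-upwind, in each case producing a recursion of the form
\[
r_{i,j}^{n+1} = \Bigl(1 - \tfrac{|\lambda_i|\Delta t}{\Delta x}\Bigr) r_{i,j}^{~n} + \tfrac{|\lambda_i|\Delta t}{\Delta x}\,\bigl(\vec{r}_{j\mp1,\pm}^{~n}\bigr)_i.
\]
Under the stated CFL bound the two weights are non-negative and sum to one, so the raw upwind update is an $\ell^\infty$ contraction. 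The reconstructed values involve $e^{\pm B_r \Delta x}$ and an $O(\Delta x)$ source integral, so I would use $\|e^{\pm B_r \Delta x}\| \le 1 + C\Delta x$ from the exponential series to bound $\|\vec{r}_{j\mp1,\pm}^{~n}\|$ in terms of $\|\vec{r}_{j\mp1}^{~n}\|$ plus an $O(\Delta x)$ forcing contribution, and then absorb the multiplicative $1+C\Delta x$ factor by a discrete Gronwall argument on any finite time horizon. The prescribed boundary updates (\ref{ns:BoundaryCondition,r}) correspond precisely to the incoming characteristics at each endpoint, so they slot into the upwind template without spoiling the argument.

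For \emph{consistency}, I would substitute a smooth exact solution $\vec{r}(t,x)$ of (\ref{eq:HypSystem,r}) into the scheme and Taylor-expand. The time difference gives $\partial_t \vec{r}(t_n,x_j) + O(\Delta t)$. Expanding (\ref{ns:r+})--(\ref{ns:r-}) with $e^{\pm B_r\Delta x}=I\pm B_r\Delta x+O(\Delta x^2)$ and approximating the Duhamel integral to first order yields
\[
\vec{r}_{j\mp1,\pm}^{~n} = \vec{r}(t_n,x_j) \mp \Delta x\,\partial_x\vec{r}(t_n,x_j) + \Delta x\bigl(B_r \vec{r} + \vec{F}_r\bigr)(t_n,x_j) + O(\Delta x^2).
\]
Substituting and using $\Lambda^+ + \Lambda^- = \Lambda$, the leading terms reconstitute $\partial_t\vec{r} + \Lambda\,\partial_x\vec{r} - B_r\vec{r} - \vec{F}_r$, which is zero on the exact solution, leaving a local truncation error of order $O(\Delta t + \Delta x)$.

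I expect the well-balanced step to be the main obstacle. Stability and consistency follow essentially from standard upwind arguments once the exponential reconstruction has been controlled by its series expansion. The subtle point for well-balancedness is a precise matching between the evolution operator appearing in (\ref{ns:r+})--(\ref{ns:r-}) and the ODE satisfied by stationary solutions of the hyperbolic model: one must check that the integration directions ($x_{j-1}\to x_j$ versus $x_{j+1}\to x_j$) are paired with the correct sign blocks $\Lambda^{\pm}$, and that the source integrals in the two reconstructions reproduce exactly the forcing contribution that makes the upwind difference collapse at equilibrium.
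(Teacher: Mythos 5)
Your proposal follows essentially the same route as the paper: the well-balanced property is obtained by recognizing (\ref{ns:r+})--(\ref{ns:r-}) as exact Duhamel solves of the stationary ODE so that both upwinded differences vanish at a discrete equilibrium, and consistency and stability follow from the first-order Taylor expansion of the exponential reconstructions combined with the standard upwind/CFL analysis. You actually supply more detail than the paper on the stability step (convex-combination weights plus a discrete Gronwall absorption of the $1+C\Delta x$ factor), where the paper simply appeals to standard techniques for first-order hyperbolic schemes.
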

\begin{proof}
	 Equations (\ref{eq:Helmholtz,q}),(\ref{eq:Helmholtz,ODE,Q,2}) are valid at steady states of  (\ref{eq:HypSystem,r})-(\ref{eq:BoundCondition,r}). 
	 Having discrete steady states given at $t_n$ means that $L\vec{q}(x)$ is projected on mesh, i.e. $\vec{r}_j^{~n}=L\vec{q}(x_j)$. At $x_j$ solution 
	 of (\ref{eq:Helmholtz,q}) with initial condition $\vec{q}(x_{j-1})=L^{-1}\vec{r}_{j-1}^{~n}$ coincides with  $L^{-1}\vec{r}_{j-1,+}^{~n}$ 
	 when $\vec{r}_{j-1,+}^{~n}$ is defined by (\ref{ns:r+}). Therefore $\vec{r}_{j}^{~n}-\vec{r}_{j-1,+}^{~n}=0$ at discrete steady states. 
	 Analogically from (\ref{eq:Helmholtz,q}) and (\ref{ns:r-}) we obtain $\vec{r}_{j+1,-}^{~n}-\vec{r}_{j}^{~n}=0$ is valid at discrete steady 
	 states of (\ref{eq:HypSystem,r})-(\ref{eq:BoundCondition,r}) and thus the scheme  (\ref{ns:sc,r})-(\ref{ns:Lambda+,Lambda-}) is reduced 
	 to $\vec{r}_{j}^{~n+1}-\vec{r}_{j}^{~n}=0$,  i.e. discrete steady states are maintained.
	
	Using Taylor's series expansion in (\ref{ns:r+}),(\ref{ns:r-}) yields the following equivalent formulas:
\begin{equation}\label{ns:r+,dx}
	\vec{r}_{j-1,+}^{~n}=\vec{r}_{j-1}^{~n}+(B_r\vec{r}_{j-1}^{~n} + \vec{F}_r(x_j))\Delta x + 0((\Delta x)^2),
\end{equation}
\begin{equation}\label{ns:r-,dx}
	\vec{r}_{j+1,-}^{~n}=\vec{r}_{j+1}^{~n}-(B_r\vec{r}_{j+1}^{~n} + \vec{F}_r(x_j))\Delta x + 0((\Delta x)^2).
\end{equation}

	On account of (\ref{ns:r+,dx}) and (\ref{ns:r-,dx}) the scheme (\ref{ns:sc,r}) equivalently writes
\begin{equation}\label{ns:sc,r+rhs}
	\frac{\vec{r}_j^{~n+1}-\vec{r}_j^{~n}}{\Delta t}+\Lambda^+\frac{\vec{r}_j^{~n}-\vec{r}_{j-1}^{~n}}{\Delta x}
	+\Lambda^-\frac{\vec{r}_{j+1}^{~n}-\vec{r}_{j}^{~n}}{\Delta x}=
	\Lambda^+B_r\vec{r}_{j-1}^{~n}+\Lambda^-B_r\vec{r}_{j+1}^{~n}+\vec{F}_{r,j} +O(\Delta x).
\end{equation}

From (\ref{ns:sc,r+rhs}) obtaining stability and consistency is straightforward when using standard techniques for studying numerical schemes for first order hyperbolic partial differential equations.  	
\end{proof}


\section{Numerical tests}\label{sec:numtests}
In the first numerical test considered here $\Delta x$ is kept constant and wavenumber $k$ gradually increases from $10$ to $10^{5}$. 
In the second numerical test $k\Delta x$  is kept constant and $\Delta x$ decreases from $10^{-1}$ to $10^{-5}$. We define exact solution of the 
problem (\ref{eq:Helmholz}),(\ref{eq:ImpedanceCondition}) by $u(x)=sin(kx)+i2cos(kx)$ that implies $f(x)=0$ in (\ref{eq:Helmholz}). 
Therefore $\vec{F}_{r}=0$ and integral vanishes in (\ref{ns:r+}), (\ref{ns:r-}). Matrix exponentials present in these formulas are precomputed 
for $k\Delta x =1$. Then for $k\Delta x =m, ~m \in \mathbb{N}$, computation of matrix exponential is reduced to the multiplication of precomputed matrices. 
In hyperbolic model we set $\lambda_1=\lambda_2=1$ and $\lambda_3=\lambda_4=-1$ that according to Theorem ~\ref{Th:Stationary} 
means that steady state is reached at $T=2$. This effect is also observed numerically in our tests. 
In the tables ~\ref{tab:error,q,k} and ~\ref{tab:error,q,dx} the norms $\|\vec{e}\|_{1,2}=max \{ \|e_1\|,\|e_2\| \} $, $\|\vec{e}\|_{3,4}=max \{ \|e_3\|,\|e_4\| \} $ 
are used for measuring the errors in numerical approximation of the function $u$ and in its derivatives respectively. Numerical results show excellent 
computational potential of the developed numerical scheme. In particular the table ~\ref{tab:error,q,k} shows that for wavenumber $k=10^5$  relative 
error  $\approx 10^{-3}$ can be obtained with just $11$ nodal points in space and $20$ time steps. Mesh refinement, see table \ref{tab:error,q,dx}, 
decreases the error further granting high accuracy when $k\Delta x =1$.

\bibliography{hyperbolic_model_for_helmholtz_equation_with_impedance_boundary_conditions}

\begin{table}[h!]
	\centering
	\caption{ Error $\vec{e}=\vec{q}_{exact}-\vec{q}_{numeric}$, $\Delta x=0.1$, $N_x=10$, $N_t=20$.}
	\begin{adjustbox}{width=1\linewidth}
		\begin{tabular}{c c c c c c}
			\hline	
			 k       & Error,$\vec{e}$     &  $l_{2,rel}$   & $l_{\infty,rel}$ & $l_{2,abs}$ & $l_{\infty,abs}$ \\ 
			\hline
			 10      & $\|\vec{e}\|_{1,2}$  & 3.3035777e-07 & 3.817435e-07  & 1.7788994e-06 & 7.63487e-07  \\
			         & $\|\vec{e}\|_{3,4}$  & 3.4928838e-07 & 4.0443439e-07 & 1.7811752e-05 & 8.0026099e-06 \\
			\hline	
			 $10^2$  & $\|\vec{e}\|_{1,2}$  & 3.1886394e-06 & 3.602908e-06  & 1.6753317e-05  & 7.205816e-06 \\
			         & $\|\vec{e}\|_{3,4}$  & 3.411358e-06  & 3.9042842e-06 & 1.7855065e-03  & 7.7608476e-04 \\
			\hline				
			 $10^3$  & $\|\vec{e}\|_{1,2}$  & 3.9453715e-05 & 4.2069045e-05 & 2.0419553e-04 & 8.413809e-05 \\
			         & $\|\vec{e}\|_{3,4}$  & 3.427245e-05  & 3.8295731e-05 & 1.8204272e+01 & 7.657276e-02   \\
                        \hline
                         $10^4$  & $\|\vec{e}\|_{1,2}$  & 3.2833097e-04 & 3.7296399e-04 & 1.7728889e-03 & 7.4592799e-04 \\
			         & $\|\vec{e}\|_{3,4}$  & 3.5056249e-04 & 4.0213891e-04 & 1.7821279e+01 & 8.0254302 \\
                        \hline
                         $10^5$  & $\|\vec{e}\|_{1,2}$  & 2.8128045e-03 & 3.1224907e-03 & 1.5103248e-02 & 6.2449814e-03  \\
			         & $\|\vec{e}\|_{3,4}$  & 3.434853e-03  & 3.441699e-03  & 1.757117e+03  & 6.882298e+02 \\
                        \hline
		\end{tabular}
	\end{adjustbox}	
	\label{tab:error,q,k}		
\end{table}
\begin{table}[h!]
	\centering
	\caption{ Error $\vec{e}=\vec{q}_{exact}-\vec{q}_{numeric}$, $k\Delta x=1$, $T=2$.}
	\begin{adjustbox}{width=1\linewidth}
		\begin{tabular}{c c c c c c}
			\hline	
			$\Delta x$ & Error,$\vec{e}$ &  $l_{2,rel}$   & $l_{\infty,rel}$ & $l_{2,abs}$ & $l_{\infty,abs}$ \\ 
			\hline
			$10^{-1}$  & $\|\vec{e}\|_{1,2}$  & 3.9453715e-05 & 4.2069045e-05 & 2.0419553e-04 & 8.413809e-05 \\
			           & $\|\vec{e}\|_{3,4}$  & 3.427245e-05  & 3.8295731e-05 & 1.8204272e-01 & 7.657276e-02 \\
			\hline	
			$10^{-2}$  & $\|\vec{e}\|_{1,2}$  & 3.3700138e-06 & 4.3448308e-06 & 5.3714493e-05 & 8.6896615e-06 \\
			           & $\|\vec{e}\|_{3,4}$  & 3.4083229e-06 & 4.2919226e-06 & 5.3992586e-02 & 8.5817495e-03 \\
			\hline
			$10^{-3}$  & $\|\vec{e}\|_{1,2}$  & 3.3997232e-07 & 4.3876751e-07 & 1.7010247e-05 & 8.7753501e-07 \\
			           & $\|\vec{e}\|_{3,4}$  & 3.4011019e-07 & 4.4119365e-07 & 1.7010874e-02 & 8.8237889e-04 \\
                        \hline
                        $10^{-4}$  & $\|\vec{e}\|_{1,2}$  & 3.3956751e-07 & 4.4190141e-07 & 5.3694783e-05 & 8.8380282e-07 \\
			           & $\|\vec{e}\|_{3,4}$  & 3.3959106e-07 & 4.4202167e-07 & 5.3694984e-01 & 8.8403766e-03 \\
                        \hline
                        $10^{-5}$  & $\|\vec{e}\|_{1,2}$  & 3.3491766e-07 & 4.4119696e-07 & 1.6746016e-04 & 8.8239392e-07 \\
			           & $\|\vec{e}\|_{3,4}$  & 3.3491975e-07 & 4.4119442e-07 & 1.6746022e+01 & 8.8238885e-02 \\
                        \hline
		\end{tabular}
	\end{adjustbox}	
	\label{tab:error,q,dx}	
\end{table}


\end{document}